\newtheorem{theorem}{Theorem}
\newtheorem{notation}{Notation}
\newtheorem{definition}{Definition}
\newtheorem{lemma}{Lemma}
\newcommand{\aplus}{A^+}
\newcommand{\bplus}{B^+}
\newcommand{\aminus}{A^-}
\newcommand{\bminus}{B^-}
\newcommand{\andbill}{\Omega(B^\pm)}
\newcommand{\andflow}[1]{\Phi_\pm^{#1}}
\begin{document}
\title[Polygonal Andreev Billiards]{Properties of the flow on a polygonal Andreev billiard}

\author[R. G. Niemeyer]{Robert G. Niemeyer}
\address{University of Maine, Department of Mathematics \& Statistics.}
\email{niemeyer@math.umaine.edu}
\thanks{The work of R. G. Niemeyer was partially supported by the National Science Foundation under the MCTP grant DMS-1148801, while a postdoctoral fellow at the University of New Mexico, Albuquerque.}
\keywords{fractal billiard, polygonal billiard, rational (polygonal) billiard, law of reflection, Andreev reflection, retro-reflection, billiard flow, iterated function system and attractor, fractal, prefractal approximations, $T$-fractal billiard, prefractal rational billiard approximations, sequence of compatible orbits, (eventually) constant sequences of compatible orbits, elusive points, periodic orbits, periodic vs. dense orbits, topological dichotomy, superconductor, nanowire, perturbation, Cooper pair, hole, electron, charge, Fermi momentum, Fermi energy, spectrum, spectral gap, pseudointegrable, almost integrable}
\subjclass[2010]{Primary: 28A80, 37D40, 37D50, Secondary: 28A75, 37C27, 37E35, 37F40, 58J99.}

\begin{abstract}
A formal definition of a (mathematical) polygonal Andreev billiard $\andbill$ and a construction of an equivalence relation $\sim_\pm$ that captures the dynamics described in physical toy model of Andreev reflection are given.  The continuous flow $\andflow{t}$ and discrete flow $F_\pm^n$ on the respective spaces $(\andbill\times S^1)/\sim_\pm$ and $(B^\pm\times S^1)/\sim_\pm$ are defined.  It is then shown that the continuous flow $\andflow{t}$ preserves the absolute value of the volume element $dx\wedge dy\wedge d\theta$ and the billiard (collision) map preserves the measure $\cos \phi dr d\phi$, respectively.  One can then characterize the dynamics of a rational polygonal Andreev billiard table.  Finally, a discussions of the effect of a fractal perturbation of the toy model of a rectangular nanowire lying upon a superconducting medium is given.
\end{abstract}

\maketitle
\setcounter{tocdepth}{2}
\tableofcontents

\section{Introduction}
A classical billiard, denoted by $\Omega(B)$ where the boundary of $\Omega(B)$ is $B$, is a compact region in the plane. The boundary $B$ is sufficiently smooth so as to allow the classical law of reflection\footnote{The angle of incidence $\theta_i$ is equal to the angle of reflection $\theta_r$.} to hold at all but finitely many points, and a pointmass traversing the billiard does so with unit speed, zero friction and makes perfectly elastic collisions in the boundary $B$ (reflecting according to the law of reflection); see Figure \ref{fig:exampleOfBilliardFlow}.  

\begin{figure}
\centering
\begin{overpic}[scale=.8]{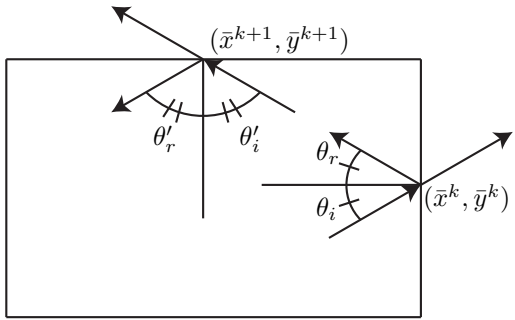}
\put(82,22){$(\bar{x}^k,\bar{y}^k)$}
\put(61,20){$\theta_i$}
\put(61,31){$\theta_r$}
\put(40,53){$(\bar{x}^{k+1},\bar{y}^{k+1})$}
\put(29,34){$\theta_r'$}
\put(46,34){$\theta_i'$}
\end{overpic}
\caption{The standard law of reflection: the angle of incidence is equal to the angle of reflection.  The vector describing the motion of the billiard ball is reflected through the tangent at $(\bar{x}^k,\bar{y}^k)$ and $(\bar{x}^{k+1},\bar{y}^{k+1})$ to produce the new direction of travel $\theta^k$ and $\theta^{k+1}$, respectively,\protect\footnotemark \ at each point.  The collision is perfectly elastic and the billiard ball has unit speed.}
\label{fig:exampleOfBilliardFlow}
\end{figure}

\footnotetext{The directions $\theta^k$ and $\theta^{k+1}$ are not shown in the figure.  Rather, we emphasize that the angle of incidence $\theta_i$ equals the angle of reflection $\theta_r$ at each point.}

The focus of this paper will be primarily on what will be called a \textit{polygonal Andreev billiard} $\andbill:=\Omega(\bplus)\cup\Omega(\bminus)$, with the reflection rule on a subset of $\bplus\cup\bminus$ being nonstandard and having motivation in the physics literature.\footnote{For a detailed introduction to the topic of mathematical billiards, see \cite{Ta1, Ta2,Sm} and Chapters 1--3 in \cite{ChrMar}.  For an excellent introduction to the subject of polygonal billiards---specifically rational billiards---and translation surfaces, see \cite{MasTa}.}  If $\Omega(B)$ is a billiard, then one may construct a well-defined flow $\Phi^t:(\Omega(B)\times S^1)/\sim \to (\Omega(B)\times S^1)/\sim$ that models the flow of a pointmass traversing $\Omega(B)$; the relation $\sim$ identifies inward and outward pointing vectors so that the flow $\Phi^t$ is continuous.\footnote{In truth, the flow is defined everywhere except for at cusps and corners.  We will follow such a convention when defining the flow on an Andreev billiard.}  The flow $\Phi^t$ also preserves the volume element $dx\wedge dy \wedge d\theta$ on $(\Omega(B)\times S^1)/\sim$.  Derived from $\Phi^t$ is the collision map $F:(B\times S^1)/\sim  \to (B\times S^1)/\sim$.  Since $B$ can be parameterized, $(B\times S^1)/\sim$ can be expressed as a two-dimensional space.  The collision map $F$, when defined, preserves the measure 
\[
d\mu = \cos\phi drd\phi
\]
\noindent on $(B\times S^1)/\sim$, where $r$ is the parameterization of $B$ (or, when $B$ is piecewise smooth, $r_i$ parameterizes a smooth subset of $B$ and $B=\cup_{i=1}^k r_i, k$ a positive integer).

The immediate goal of this paper is to rigorously define the notion of Andreev reflection as inspired by the physical phenomenon by the same name, which we now briefly describe; \cite{Bee2} for an excellent survey of the topic of Andreev billiards from the physical perspective; additionally, see \cite{Bee1, Bee2, BerBarBee, GooJacBee, MiPiMacBee, Wi} and the relevant references therein for the state of the art of Andreev reflection in nanowires and other physical settings.  At the nanoscale---this being on the order of magnitude $10^{-9}$---quantum effects become relevant and certain concessions must be made when modeling nanoscale systems.\footnote{Such models are semi-classical in nature.} The specific setting from which this paper draws inspiration is that of a toy model of nanowire (a string of molecules much longer than tall) resting upon a superconducting plate.  As one investigates the behavior of an electron (modeled as a pointmass) in a field of electrons, he or she assumes an electron obeys the law of reflection at the top of the `wire' and then collides in the bottom of the `wire,' and subsequently disappears.  The electron does not truly disappear (nor does it truly strictly obey the law of reflection), but is pulled from the nanowire and forced to pair with another electron, this newly formed pair of electrons being called a \textit{Cooper pair}; see Figure \ref{fig:electronPairing} for an illustration of an idealization of this phenomenon.  It is worth noting that this toy model is valid when the energy in the system is the Fermi energy, this being an important requirement in the study of the solid state physics of metallic substances and superconductors.  Specifically, retro-reflection will not occur when the energy of the system is not the Fermi energy.\footnote{The author would like to thank Carlo Beenakker for pointing out these facts in previous drafts.}

 In the absence of the electron, the `stationary' electrons\footnote{Of course, no electron is ever stationary.  We are indeed imposing  many simplifications on the physical system when deriving a mathematical construct.} react to the departure of the electron by exhibiting a wave (visually speaking) around the hole that gives the illusion that the hole is moving back along the trajectory of the electron.  The hole then eventually collides into the bottom of the wire, whereupon an electron is pulled back into the nanowire; see Figure \ref{fig:electronFillsHole}.
\begin{figure}
\includegraphics[scale=.45]{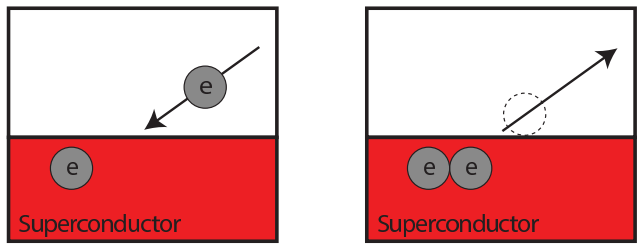}	
\caption{Ideally speaking, an electron reflects off of every side of the nanowire until it reaches the bottom of the wire.  It is there where the electron is pulled from the nanowire, forming a Cooper pair and a hole is left in its place.}
\label{fig:electronPairing}
\end{figure}
\begin{figure}
\includegraphics[scale=.45]{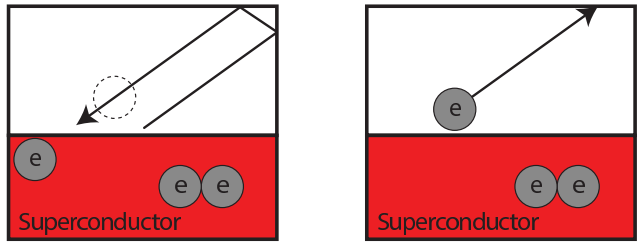}	
\caption{The hole returns to the bottom of the nanowire. At such a time, an electron is pulled into the nanowire and the process repeats.}
\label{fig:electronFillsHole}
\end{figure}

Seeking to reigorously define (mathematical) Andreev reflection, in our idealized setting, we will assign a parity to a pointmass.  The pointmass begins with positive parity.  Upon intersecting with the Andreev subset of an Andreev billiard (both of which to be formally defined in \S\ref{sec:PhysicallyInspiredBilliard}), the parity reverses and becomes negative, much like how the departure of an electron from a nanowire results in a hole.  Subsequent collisions in the Andreev subset cause the parity to change from negative to positive and from positive to negative.  

The second goal is twofold: 1) to rigorously define an Andreev billiard and 2) rigorously define the continuous and discrete flows on an Andreev billiard.  

Once we have rigorously defined an Andreev billiard, we will show that the continuous and discrete flows (as defined) will preserve the essence of an Andreev billiard, namely, the continuous flow will negate the volume form $dx\wedge dy\wedge d\theta$ when intersecting with the Andreev subset and the discrete flow will preserve the measure $\cos\phi drd\phi$.

The third goal will be to examine the properties of the continuous/discrete flow.  That is, we will show that under certain conditions, for almost every initial direction, the flow on almost any polygonal Andreev billiard will be closed.

Finally, taking motivation from experimental investigations of physical Andreev billiards, we will investigate the effect of a fractal perturbation a toy model of a rectangular Andreev billiard.  

\vspace{2 mm}

The paper is organized as follows.  In \S\ref{sec:PhysicallyInspiredBilliard}, the physics of an Andreev billiard is described and rigorous definitions of mathematical Andreev reflection and polygonal Andreev billiard are given.  The continuous flow $\andflow{t}$ on a polygonal Andreev billiard is described in \S\ref{sec:ctsFlow} and it is there that one sees that the absolute value of the volume element on the phase space of the billiard is preserved. In fact, the parity of the billiard ball of physical Andreev billiard is reflected in the fact that the volume element changes sign under the flow $\andflow{t}$.  In \S\ref{sec:discreteFlow}, the discrete flow is described and is shown to preserve the measure $\cos\phi d\phi dr$ on a section of the phase space.  Finally, in \S\ref{sec:perturbationsInTheAndreevSideOfOmegaA}, an investigation of the effects of a fractal perturbation of the toy model of a physical Andreev billiard is made.  The reason for this investigation is to present theoretic evidence in support of the argument that certain rough boundaries of physical Andreev billiards can be first studied as fractally perturbed boundaries.  A discussion of the results is then given and directions for future research are discussed.

\section{A physically inspired billiard}
\label{sec:PhysicallyInspiredBilliard}
We begin with a definition, with the end goal being a rigorous definition for Andreev reflection (given in the form of an equivalence relation $\sim_\pm$) and an Andreev billiard.

\begin{definition}[Retro-reflection]
Let $\Omega(B)$ be a mathematical billiard with boundary $B$.  Suppose $F$ is the billiard map acting on $(B\times S^1)/\sim$.  If, for some $n>0$, $(x^n,\theta^n) := F^n(x^0,\theta^0)$, $\theta^n = \theta^{n-1}+\pi$, then we say the orbit $\{F^k(x^0,\theta^0)\}_{k=0}^\infty$ has experienced retro-reflection at $x^n$.  
\end{definition}

In order to build towards our end goal of this section, we give a heuristic definition of a mathematical Andreev billiard inspired by the toy model of a physical Andreev billiard. To such end, suppose now that the phase space for a billiard is given by 
\begin{align}
	(\Omega(B)\times S^1)/\sim&\times \{-1,1\},
	\label{eqn:heuristicAndreevBilliard}
\end{align}
\noindent  giving us effectively two copies of the phase space.  As illustrated in Figure \ref{fig:PhysicalAndreevReflection}, in  a (mathematical) Andreev billiard, when the pointmass retro-reflects in a side of $\Omega(B)$, the parity of the pointmass changes.  Using our heuristic description given in (\ref{eqn:heuristicAndreevBilliard}), we model this by requiring that the pointmass has continued onto the other copy of the billiard, as shown in Figure \ref{fig:wrappingUpTables}.

Heuristically speaking, a pointmass has experienced Andreev reflection if the parity of the pointmass has changed upon retro-reflecting.  We define this formally as follows. 

Define $\Omega(\bplus)$  (or $\Omega(\bminus)$) to be the billiard with piecewise smooth boundary $\bplus$ (resp., $\bminus$).  $\Omega(\bplus)$ and $\Omega(\bminus)$ are billiards, whereupon the classical billiard dynamics are defined (excluding corners and cusps).  Let $\aplus$ and $\aminus$ be nonempty subsets of $\bplus$ and $\bminus$, respectively, containing no singularities of the billiard flow $\Phi_+$ defined on $(\Omega(\bplus)\times S^1)/\sim$ and $\Phi_-$ defined on $(\Omega(\bminus)\times S^1)/\sim$, where $\sim$ is the standard relation identifying inward and outward pointing vectors on the boundary of a billiard.  That is, at each point $\mathbf{x}=(x_1,x_2)$ of the boundary of a billiard, $(\mathbf{x},\theta)$ is identified with $(\mathbf{x},\gamma)$, where $\gamma = r(\theta)$, $r(\theta)$ being the reflection of $\theta$ through the tangent line through  $\mathbf{x}$.    Further assume that $\bminus = \rho(\bplus)$, where $\rho$ is the reflection through a line parallel to $\aplus$.  Consequently, it is the case that $\aminus = \rho(\aplus)$.  Without loss of generality, we may assume $\aplus$ and $\aminus$ are vertical segments in their respective billiards.\footnote{Recall that we are restricting our attention to sets in the plane with polygonal boundary.}

	First note that $X^\circ$ denotes the interior of a set $X$. Define $\sim_\pm$ on $(\Omega(\bplus)\cup\Omega(\bminus))\times S^1$ as follows: $(\mathbf{x},\theta)\sim_\pm (\mathbf{y},\gamma)$ if and only if 
	
	\begin{itemize}
	\item $\mathbf{x},\mathbf{y}\in \Omega(\bplus)^\circ\cup\Omega(\bminus)^\circ$ and $(\mathbf{x},\theta)= (\mathbf{y},\gamma)$ 

or
		\item $\mathbf{x},\mathbf{y}\in(\bplus\cup\bminus)\setminus  (\aplus\cup\aminus)$ and $\mathbf{x}=\mathbf{y}$ and 
		\begin{itemize}
			\item $\theta = \gamma$ or
			\item $\theta = r(\gamma)$
		\end{itemize}

	or
	\item  $\mathbf{x},\mathbf{y}\in \aplus\cup \aminus$ and 
		\begin{itemize}
			\item $\mathbf{x}=\mathbf{y}$ and $\theta=\gamma$ or
			\item $\mathbf{x} = \rho(\mathbf{y})$ and $\theta = r(\gamma) +\pi$
		\end{itemize}
	\end{itemize}
	
	The relation $\sim_\pm$ is an equivalence relation on $\Omega(\bplus)\cup\Omega(\bminus)\times S^1$.  As in the classical case, we choose the representative element of an equivalence class to be the ordered pair $(\mathbf{x},\theta)$, where $\theta$ is an inward pointing direction based at $\mathbf{x}$.  We now formally define \emph{Andreev billiard}.

\begin{definition}[Polygonal Andreev billiard]
Let $\Omega(\bplus)$ be a polygonal billiard, $\aplus \subseteq \bplus$ be a nonempty, connected $C^1$ subset of $\bplus$, where $\aplus$ is a vertical segment of $\bplus$ and $\Omega(\bminus) := \rho(\Omega(\bplus))$, with $\rho(\cdot)$ being the reflection of a set through a line parallel to $\aplus$ and not intersecting the interior of $\Omega(\bplus)$.  Define $\aminus :=\rho(\aplus)$ and let $\Omega(\bplus)$ and $\Omega(\bminus)$ be glued along $\aplus$ and $\aminus$ according to $\sim_\pm$.  Then, $\Omega(\bplus)\cup \Omega(\bminus)$ is called a \emph{polygonal Andreev billiard} and the phase space for the billiard is $(\Omega(\bplus)\cup\Omega(\bminus))\times S^1/\sim_\pm$. 	
\end{definition}

\begin{notation}
So as to simplify notation and improve the readability of the remainder of the article, we will denote an Andreev billiard $\Omega(\bplus)\cup\Omega(\bminus)$ by $\andbill$.  That is, $(\Omega(\bplus)\cup\Omega(\bminus))\times S^1/\sim_\pm = \andbill\times S^1/\sim_\pm$.  Similarly, $B^\pm := \bplus \cup \bminus$.
\end{notation}

\begin{definition}[Andreev subset]
Let $\andbill$ be a polygonal Andreev billiard.  The subset $\aplus\cup\aminus$ is the \emph{Andreev subset} of the polygonal Andreev billiard.  For the sake of readability, we denote the Andreev subset by  $A^\pm$.

\end{definition}

Really, the Andreev subset is two subsets of the two copies of a polygonal billiard properly identified.  Upon identifying point-angle pairs, one sees that the $\aplus$ and $\aminus$ are effectively the same in the phase space $\andbill\times S^1/\sim_\pm$.

Figure \ref{fig:AndreevBilliardTable} gives an example of an orbit of an Andreev billiard; the same orbit is illustrated as in Figure \ref{fig:gluedTables}, but, as we will see, is given by a well-defined flow on $\andbill\times S^1/\sim_\pm$.

\begin{figure}
\centering
\includegraphics[scale=.8]{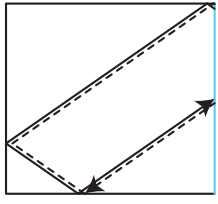}
\caption{The blue side (or the lighter right hand vertical side for those not seeing the color in the figure) of the billiard is the Andreev subset of the Andreev billiard.  Any trajectory retro-reflects in the blue side.  Upon retro-reflecting, the parity of the billiard ball changes from positive to negative (or from negative to positive), mimicking the physical situation where an electron disappears and a hole 'moves' back along the trajectory.}
\label{fig:PhysicalAndreevReflection}
\end{figure}

\begin{figure}
	
\includegraphics{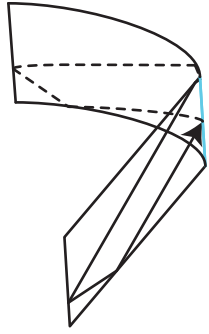}
\caption{One can see that the retro-reflection with parity illustrated in Figure \ref{fig:PhysicalAndreevReflection} is actually a flow on two copies of a billiard appropriately glued together with a nonstandard flow across the side on which the two billiards are glued.  The glued copies of the billiard are then illustrated in the plane in Figure \ref{fig:gluedTables}.} 
\label{fig:wrappingUpTables}
\end{figure}

\begin{figure}
	\includegraphics[scale=.8]{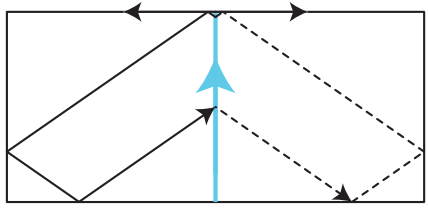}
	\caption{Here we see two copies of a billiard glued together along an edge.  The orientation of the parameterization of the boundary  of the billiard on the right is opposite that of the billiard on the left.  Reflecting the left billiard along the vertical edge common to both results in exactly the image shown in Figure \ref{fig:PhysicalAndreevReflection}.}
	\label{fig:gluedTables}
\end{figure}

\section{Volume parity under the flow $\andflow{t}$}
\label{sec:ctsFlow}
We now define the flow $\andflow{t}$ on an Andreev billiard and show that the flow preserves the absolute value of the volume of a connected set in $\andbill\times S^1/\sim_\pm$.  The map
	\begin{align}
\notag	\andflow{t}:(\andbill\times S^1)/\sim_\pm &\to (\andbill\times S^1)/\sim_\pm
	\end{align}
	\noindent is well defined for all $t\in \mathbb{R}$, so long as $\andflow{t}((x,y),\theta)\neq ((x^t,y^t),\theta^t)$, where $(x^t,y^t)$ is a corner or cusp of $\Omega(\bplus)\cup\Omega(\bminus)$.  Additionally, when defined, 
\begin{align}
\notag \andflow{t}((x,y),\theta) &= ((x^t,y^t),\theta^t),
\end{align}

\noindent where $\theta^t$ is inward pointing at $(x^t,y^t)$; see Figure \ref{fig:AndreevBilliardTable} for an example of a billiard orbit on $\andbill$.

\begin{figure}
	\begin{overpic}{AndreevBilliardTable}
	\put (17,-5) {$\Omega(\bplus)$}
	\put (72,-5) {$\Omega(\bminus)$}
	\put (44.5,30) {$\aplus$}
	\put (48.5, 6) {$\aminus$}
\end{overpic}
\caption{An orbit of an Andreev billiard $\andbill$.}
\label{fig:AndreevBilliardTable}
\end{figure}

\begin{theorem}
The volume element $dx\wedge dy\wedge d\theta$ on $(\andbill\times S^1)/\sim_\pm$ is not preserved under $\andflow{t}$.  Furthermore, for $\theta,\theta'\in S^1$, $(x,y)\in \Omega(\bplus)^\circ$, $(x',y')\in \Omega(\bminus)^\circ$ and $t>0$ such that $\andflow{t}((x,y),\theta) = ((x',y'),\theta')$, it is the case that
\[
dx'\wedge dy'\wedge d\theta' = - dx\wedge dy\wedge d\theta.
\]
\end{theorem}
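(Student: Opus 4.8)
The plan is to localize the problem: away from the Andreev subset the flow $\andflow{t}$ is ordinary billiard flow, so it suffices to track what happens at crossings of $\aplus\cup\aminus$ and then count them. First I would recall that on each copy the free (straight-line) flow is generated by $V=\cos\theta\,\partial_x+\sin\theta\,\partial_y$, which is divergence-free and hence preserves $\omega:=dx\wedge dy\wedge d\theta$, and that a standard reflection in $(\bplus\cup\bminus)\setminus(\aplus\cup\aminus)$ preserves $\omega$ together with its orientation (the classical fact recalled in the introduction, equivalent to the billiard map preserving $\cos\phi\,dr\,d\phi$). Thus the only interaction capable of altering $\omega$ is a crossing of the Andreev subset, and the theorem follows once I show (i) that a single crossing multiplies $\omega$ by $-1$, and (ii) that an orbit running from $\Omega(\bplus)^\circ$ to $\Omega(\bminus)^\circ$ must cross $\aplus\cup\aminus$ an odd number of times.

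For (i) I would place $\aplus\subset\{x=0\}$ with $\Omega(\bplus)$ locally in $\{x<0\}$, and take $\rho$ to be reflection in $\{x=c\}$, so that $\aminus\subset\{x=2c\}$ and $\Omega(\bminus)$ lies locally in $\{x>2c\}$. Reading off the third clause of $\sim_\pm$ and using $r(\gamma)=\pi-\gamma$ at a vertical wall, the equation $\theta=r(\gamma)+\pi$ forces $\gamma=-\theta$, while $\mathbf{x}=\rho(\mathbf{y})$ fixes the $y$-coordinate; hence the phase-space transition across the Andreev subset is
\[
((0,y_0),\theta)\longmapsto((2c,y_0),-\theta),
\]
which is exactly the retro-reflection $\theta\mapsto\theta+\pi$ followed by the mirror $\rho$ carrying the orbit to the second copy. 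Composing with the short free flights to the transverse sections $\{x=-\epsilon\}$ and $\{x=2c+\epsilon\}$, I would verify that the induced section map is $(y,\theta)\mapsto(y,-\theta)$: the $\rho$-image acts as the identity along the (vertical) segment, so only the velocity angle is reversed.

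To transfer this to $\omega$ I would use the flux (suspension) form: in flow-box coordinates adapted to a vertical section one has $\omega=ds\wedge\mu$, where $\mu=\cos\theta\,dy\wedge d\theta$ is the flux form through that section (equivalently $\cos\phi\,dr\,d\phi$ on $\aplus\cup\aminus$). Since the section map sends $(y,\theta)\mapsto(y,-\theta)$, it pulls $\mu$ back to $\cos\theta\,dy\wedge(-d\theta)=-\mu$; and because the time-of-flight term drops out of the wedge (a $1$-form wedged with the $2$-form $\mu$ on the $2$-dimensional section vanishes), the full flow map pulls $\omega$ back to $-\omega$. This establishes (i). For (ii) I would note that a trajectory changes copies only at $\aplus\cup\aminus$ and stays in a single copy between crossings, so passing from $\Omega(\bplus)^\circ$ to $\Omega(\bminus)^\circ$ forces an odd number of crossings; with standard reflections contributing $+1$ each, the net factor is $(-1)^{\mathrm{odd}}=-1$, yielding $dx'\wedge dy'\wedge d\theta'=-dx\wedge dy\wedge d\theta$ and, a fortiori, non-preservation of $\omega$.

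The main obstacle is (i): a crossing is a codimension-one, instantaneous identification on the boundary, not a genuine map of a $3$-dimensional neighborhood, so one cannot simply multiply a ``position Jacobian'' by an ``angle Jacobian'' (indeed $\rho$ is orientation-reversing in the plane, yet it acts as the identity \emph{along} the Andreev segment, and the true sign comes solely from $\theta\mapsto-\theta$). Getting the sign right therefore demands the flux-form/suspension bookkeeping above, carried out with scrupulous attention to the inward-pointing representative convention so that no spurious sign is introduced or cancelled. A secondary point requiring care is the parity count in (ii), which I would make rigorous by tracking the $\{-1,1\}$ parity label attached to the pointmass in the heuristic model.
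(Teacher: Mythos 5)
Your proposal is correct, and it reaches the sign flip by a genuinely different route from the paper. The paper works entirely in collision-adapted variables: it writes $(x',y',\theta')$ in terms of $(r,s',\psi)$ (arc length along the Andreev side, distance along the outgoing ray, angle with the side), uses $\theta'=\theta+2\psi+\pi$ and $d\theta=-d\psi$, and computes the wedge product by hand to get $dx'\wedge dy'\wedge d\theta'=\sin\psi\,dr\wedge ds\wedge d\psi$, which it compares against the standard pre-collision expression $dx\wedge dy\wedge d\theta=-\sin\psi\,dr\wedge ds\wedge d\psi$. You instead factor the volume form as $ds\wedge\mu$ with $\mu=\cos\theta\,dy\wedge d\theta$ the flux form through a transverse section, extract the Andreev section map $(y,\theta)\mapsto(y,-\theta)$ directly from the third clause of $\sim_\pm$ (via $r(\gamma)=\pi-\gamma$ at a vertical wall), and read off the sign from the pullback of $\mu$; your observation that the time-of-flight term dies for degree reasons is the right way to make the suspension bookkeeping airtight. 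The two computations are equivalent in content, but yours isolates the sign more structurally, and you additionally supply the odd-crossing parity argument needed to pass from a single Andreev crossing to the general statement about arbitrary $(x,y)\in\Omega(\bplus)^\circ$ and $(x',y')\in\Omega(\bminus)^\circ$ --- a step the paper's proof leaves implicit, since it only analyzes one crossing. The trade-off is that the paper's argument is more elementary and self-contained, while yours leans on the (standard) flux-form formalism but generalizes more cleanly.
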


\begin{proof}
The obvious concern is how $dx\wedge dy \wedge d\theta$ is affected by a collision in the boundary $B^\pm$.  Since the map $\andflow{t}$ preserves the volume element after each collision in a segment not contained in $\aplus\cup\aminus$, we focus our proof on trajectories that would intersect the Andreev subset $\aplus\cup \aminus$.  Consider $x,y,x',y',\bar{x},\bar{y}$ as shown in Figure \ref{fig:AndreevReflection}.  The angle $\gamma$ is the angle the tangent $r$ makes with the positive horizontal axis; $\psi$ is the angle between the dotted trajectory and $r$; $\theta$ is the initial angle and $\theta'$ is the angle at which the billiard ball continues on after colliding at $(\bar{x},\bar{y})$.  Then,
\begin{align}
\notag	x'  & = \bar{x}+s'\cos \theta\\
\notag	dx' &= d\bar{x} + d(s'\cos\theta)\\
\notag			&= dr \cos \gamma + ds'\cos\theta - s'\sin\theta d\theta, \\
\notag &\\
\notag	y' &= \bar{y} - s'\sin\theta\\
\notag    dy'&= d\bar{y} - d(s'\sin\theta)\\
\notag    	& = dr\sin \gamma - ds'\sin\theta - s'\cos\theta d\theta,\\
\tag*{and} &\\
\notag	d\theta ' &= d(\theta+2\psi +\pi)\\
\notag					& = d\theta + 2d\psi \\
\notag					& = -d\psi + 2d\psi \\
\notag					& = d\psi.
\end{align}

\noindent Consequently,
\begin{align}
\notag	dx'\wedge dy'\wedge d\theta' &= dx'\wedge dy'\wedge d\psi \\
\notag											  & = (-\cos\gamma\sin\theta dr\wedge ds' - \cos\theta  \sin\gamma dr\wedge ds)\wedge d\psi\\
\notag									  &  = -\sin (\theta+\gamma)dr\wedge ds'  \wedge d\psi\\
\notag										  & = \sin (\theta +\gamma) dr\wedge ds \wedge d\psi.
\end{align}

\noindent  We note that $\theta+\gamma = \pi - \psi$.  It then follows that

\begin{align}
\notag	dx'\wedge dy'\wedge d\theta' &= \sin(\pi - \psi)dr\wedge ds\wedge d\psi \\
\notag		&=\sin \psi dr\wedge ds\wedge d\psi.
\end{align}

 Since $dx\wedge dy\wedge d\theta = -\sin\psi dr \wedge ds\wedge d\psi$, we see that the volume element changes sign when a trajectory intersects $\aplus$ or $\aminus$. 
\end{proof}

\begin{figure}
	
\begin{overpic}{AndreevReflection}
 \put (-6,-3) {$(x,y)$}
 \put (38,22) {$(\bar{x},\bar{y})$}
 \put (43,40) {$r$}
 \put (98,-3) {$(x',y')$}
 \put (20,3) {$\theta$}
 \put (60,33) {$\psi$}
 \put (65,37) {$\gamma$}
 \put (91,10) {$2\pi-\theta'$}
\end{overpic}
\caption{A diagram of the billiard flow on $(\andbill\times S^1)/\sim_\pm$.}
\label{fig:AndreevReflection}
\end{figure}

The fact that the volume element changes sign captures the essence of the physical Andreev reflection in the toy model: the pointmass represents either an electron or a hole, and what the pointmass represents changes each time with a `collision' in the Andreev subset.  More precisely, in the physical toy model, there is a particular parity: hole or electron.  In our rigorously defined setting, we see that the volume of an open set $X$ contained in the interior of $\Omega(\bplus)$ will be negated as (and if) $\Phi^t$ transports $X$ into $\Omega(\bminus)$.  That is, if in a fixed direction $\theta$, $\Phi^t(X)\subseteq \Omega(\bminus)^\circ$, then the volume of $\Phi^t(X)$ in $\Omega(\bminus)$ will be opposite that of the volume of $X$, much like how an electron colliding with the Andreev subset (the side lying upon a superconducting medium), is replaced by a hole and such a hole is then subsequently replaced by another electron.  Again, one observes a physical phenomenon and attempts to construct a rigorous mathematical setting in which the toy model of such a physical setting is rigorously described.  The volume parity of a set rigorously captures the electron-hole parity observed in the toy model.
  \subsection{A closed flow}
  \begin{definition}[Rational polygonal Andreev billiard]
An polygonal Andreev billiard $\andbill$ is a \emph{rational polygonal Andreev billiard} if $\bplus$ is a rational polygon, meaning that the interior angles of $\bplus$ are rational multiples of $\pi$.	
\end{definition}

In a fixed direction $\theta$, one may consider the subspace 
\[
M^\pm_\theta:=(\andbill\times \{\theta,\theta_1,\theta_2,...,\theta_n\})/\sim_\pm\subseteq (\andbill\times S^1)/\sim_\pm,
\]
 where $\theta_1,...,\theta_n$ are all the possible angles of reflection experienced by the billiard ball when colliding in the sides of $\andbill$.  When one restricts the flow $\andflow{t}t$ to $M^\pm_\theta$, the following result holds.

\begin{theorem}
	For  any rational polygonal Andreev billiard $\andbill$ and almost any direction $\theta$ (with respect to the Lebesgue measure), the flow $\andflow{t}|_{M^\pm_\theta}$ is closed.
	 \end{theorem}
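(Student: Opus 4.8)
The plan is to combine two facts: that retro-reflection in the Andreev subset $A^\pm$ is precisely the time-reversal involution of the billiard flow, and that a rational polygon is minimal in almost every direction. The first fact forces any orbit meeting $A^\pm$ at two distinct times to be periodic; the second guarantees that, off a measure-zero set of directions, every orbit does meet $A^\pm$. Before starting I would reduce to a single copy: applying $\rho$ to every arc of the orbit lying in $\Omega(\bminus)$ folds the two glued copies onto $\Omega(\bplus)$, and under this folding the Andreev clause of $\sim_\pm$ across $A^\pm$ (recall $\mathbf{x}=\rho(\mathbf{y})$, $\theta=r(\gamma)+\pi$, with $\aplus$ vertical) becomes the retro-reflection $\theta\mapsto\theta+\pi$ at the segment $\aplus$. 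This folding is a conjugacy of flows, so it suffices to prove periodicity for the single-copy model with ordinary reflection off $\bplus\setminus\aplus$ and retro-reflection off $\aplus$.

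Next I would make the time-reversal symmetry explicit. Let $\Phi^t$ denote the ordinary billiard flow of $\Omega(\bplus)$ and let $R(\mathbf{x},\theta)=(\mathbf{x},\theta+\pi)$ be velocity reversal; the standard identity $\Phi^t\circ R = R\circ\Phi^{-t}$ holds because elastic reflection commutes with reversing velocity. The content of retro-reflection is exactly that the Andreev collision map on $\aplus$ equals $R$. Suppose the folded orbit meets $\aplus$ at consecutive times $t_0<t_1$, with pre-collision states $(q_0,\psi_0)$ and $(q_1,\psi_1)$, so the intervening arc satisfies $\Phi^{t_1-t_0}\bigl(R(q_0,\psi_0)\bigr)=(q_1,\psi_1)$. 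Flowing forward from the post-collision state $R(q_1,\psi_1)$ and using the identity gives $\Phi^{s}\bigl(R(q_1,\psi_1)\bigr)=R\bigl(\Phi^{-s}(q_1,\psi_1)\bigr)$, i.e.\ the orbit retraces the previous arc backwards; at $s=t_1-t_0$ this equals $R\bigl(R(q_0,\psi_0)\bigr)=(q_0,\psi_0)$, the pre-collision state at $t_0$. The next retro-reflection then reproduces $R(q_0,\psi_0)$, so the orbit closes with period $2(t_1-t_0)$. It therefore suffices to show that each regular orbit meets $\aplus$ at least twice.

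That each regular orbit meets $\aplus$ will follow from minimality. Here I would invoke rational-billiard theory (see \cite{MasTa}): for a rational polygon the directional flow decomposes over the finite orbit of directions generated by the reflections, and by the Kerckhoff--Masur--Smillie theorem the flow is uniquely ergodic, hence minimal, for Lebesgue-almost every $\theta$. Discarding also the finitely many $\theta$ some reflected copy of which is vertical (parallel to $\aplus$) retains a full-measure set. For such $\theta$ every regular orbit is dense in $\Omega(\bplus)$, and since $\aplus$ is a segment of positive length transverse to every direction in the orbit-class, density forces a crossing of $\aplus$ in finite forward time; applying the same to the reversed direction $\theta+\pi$ (also minimal) after the first crossing yields a second crossing. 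Two crossings give periodicity by the previous paragraph, and $M^\pm_\theta$ is exactly the union of these finitely many direction-slices, so $\andflow{t}|_{M^\pm_\theta}$ is closed.

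The main obstacle is the step ``dense orbit $\Rightarrow$ orbit actually meets the transversal $\aplus$,'' that is, upgrading minimality from density in the surface to genuine incidence with a boundary segment; I would handle this through the first-return (cross-section) map to $\aplus$ and unique ergodicity, using that the return map is defined off a measure-zero set and that the Andreev orbit agrees with the ordinary orbit up to its first incidence with $\aplus$, so the ordinary orbit's incidence transfers to the Andreev orbit. The remaining care is bookkeeping the measure-zero exceptional directions (non-minimal directions and directions parallel to some reflected copy of $\aplus$) and the separatrices running into corners, all of which are excluded by ``almost any'' together with the standing convention that the flow is undefined at singularities.
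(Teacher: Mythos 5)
Your proposal is correct and follows essentially the same route as the paper: both arguments invoke Kerckhoff--Masur--Smillie to guarantee that almost every directional orbit meets the Andreev segment, and both observe that retro-reflection there forces the orbit to retrace itself (your folding of the two copies onto $\Omega(\bplus)$ via $\rho$ and the time-reversal identity is just a change of coordinates for the paper's tracking of the mirrored trajectory through $\Omega(\bminus)$). Your explicit attention to the step ``minimality $\Rightarrow$ the orbit actually crosses $\aplus$'' is a point the paper asserts without comment, but the underlying argument is the same.
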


\begin{proof}
Kerckhoff, Masur and Smillie proved that for any rational billiard $\Omega(B)$ and almost any direction $\theta$, the billiard flow $\Phi_\theta^t$ on $M_\theta$ is uniquely ergodic; see \cite{KeMasSm1, KeMasSm2}.\footnote{In the classical case, $M_\theta :=(\Omega(B)\times \{\theta,\theta_1,\theta_2,...,\theta_n\})/\sim_\pm\subseteq (\andbill\times S^1)/\sim$, where $\theta_i$, $i\leq n$, are the angles of reflection made by the pointmass when colliding in the boundary $B$.}  Consider the rational polygonal Andreev billiard table $\andbill$.  The flow $\andflow{t}$ restricted to $(\Omega(\bplus) \setminus \aplus\times S^1)/\sim_\pm$ is identical to the flow $\Phi_+^t$ restricted to $(\Omega(\bplus)\times S^1)/\sim$. Therefore, for almost every direction $\theta$ and for every $x\in \Omega(\bplus$, there exists $t_0$ such that $\Phi_+^{t_0}(x,\theta) \in (\aplus\times S^1)/\sim$.

Let $(y,\gamma) = \Phi^{t_0}_+(x,\theta)$.  Then, $r(\gamma)+\pi$ based at $\rho(y)$ is inward pointing. Given the nature of the reflection at the Andreev subset, $\Phi_-^{t_0}(\rho(y),r(\gamma)+\pi) = (\rho(x),\theta+\pi)$.  Additionally, there exists $t_1>0$ such that $\Phi^{-t_1}_+(x,\theta)\in (\aplus\times S^1)/\sim$.  Consequently, $\Phi^{t_1}_-(\rho(x),\theta+\pi)\in (\aminus\times S^1)/\sim$.  If $z$ is the base-point of $\Phi^{-t_1}_+(x,\theta)$, then $\rho(z)$ is the base-point of $\Phi^{t_1}(\rho(x),r(\theta)+\pi)$.  We conclude that $\andflow{2t_0+t_1}(x,\theta) = (x,\theta)$ for every $x\in \Omega(\bplus)$.  By a similar argument, the flow $\andflow{t}(x,\theta)$ is closed for every $x\in \Omega(\bminus)$.  Therefore, the rational polygonal Andreev billiard flow is closed for almost every direction $\theta$.

\end{proof}

\section{Preservation of the measure $\cos\phi drd\phi$}
\label{sec:discreteFlow}

The collision map $F_\pm: (B^\pm\times S^1)/\sim_\pm \to (B^\pm\times S^1)/\sim_\pm$ (when such a map is defined), preserves the measure $\cos\phi dr d\phi$.  The derivative of $F_\pm$ is entirely dependent on the curvature of the segments comprising $\bplus$ and $\bminus$ and the initial direction $\theta$.  We then denote the parameterization of the side on which $(\bar{x}',\bar{y}')$ lies by $r'$; see Figure \ref{fig:differentiable}.

\begin{figure}
	\begin{overpic}[scale=.9]{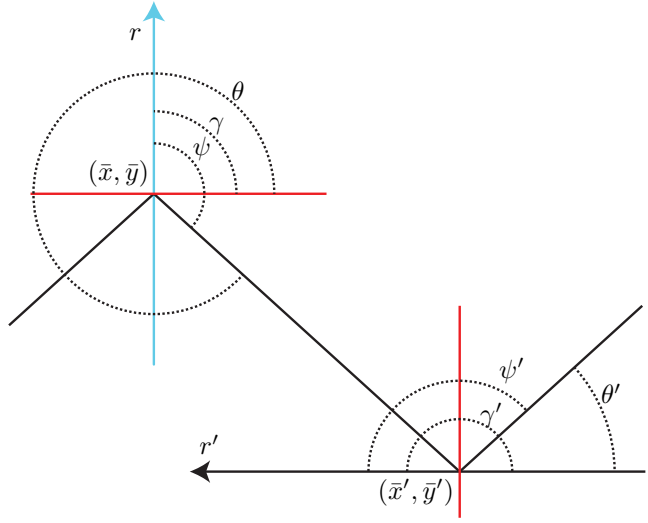}
	\put(19,75){$r$}	
	\put(29,57){$\psi$}
	\put(31.5,61){$\gamma$}
	\put(35,65.5){$\theta$}
	\put(74,15){$\gamma'$}
	\put(77,22){$\psi'$}
	\put(93,18){$\theta'$}
	\put(12.5,53){$(\bar{x},\bar{y})$}
	\put(58,3){$(\bar{x}',\bar{y}')$}
	\put (30,10) {$r'$}
	\end{overpic}
	\caption{The map $F_\pm$ is piecewise differentiable.}
	\label{fig:differentiable}
\end{figure}

Figure \ref{fig:differentiable} demonstrates how a pointmass is reflected in the Andreev subset of $\andbill$.\footnote{More precisely, $\aplus$ and $\aminus$ have been identified in the phase space.}  The subset $\aplus$ (or $\aminus$) has a parametrization given by $r$. We  note that $\theta$ is the direction of the trajectory, $\theta'$ the angle at which the pointmass is reflected off of $r'$.  The angles $\theta$ and $\theta'$ are measured relative to positive horizontal axis, as shown.  The angles $\gamma$ and $\gamma'$ measure the angle between the  positive horizontal axis and the sides $r$ and $r'$, respectively.  The angles $\psi$ and $\psi'$ measure the angle between $r$ and $r'$, respectively.

\begin{lemma}
	Let $\theta$, $\gamma$, $\gamma'$, $\psi$ and $\psi'$ be as described in the preceding paragraph and illustrated in Figure \ref{fig:differentiable}.  Then, $\theta \bmod{\pi}  = (\gamma'+\psi')\bmod{\pi} = (\gamma -\psi)\bmod{\pi}$.
\end{lemma}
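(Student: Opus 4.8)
The plan is to exploit the one fact that forces the three quantities to coincide: the pointmass travels along a \emph{single straight segment} from $(\bar{x},\bar{y})$ on the side $r$ to $(\bar{x}',\bar{y}')$ on the side $r'$, with no intermediate collision. The slope of a line is a well-defined element of $\mathbb{R}/\pi\mathbb{Z}$ (an unoriented line does not distinguish its two opposite directions), and this slope is exactly what each of the three expressions computes. So the proof reduces to reading off that common slope from the local angle data at each endpoint and invoking its invariance along a straight segment.

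First I would record the direction of travel at the departure point. The side $r$ makes angle $\gamma$ with the positive horizontal axis, and the trajectory makes angle $\psi$ with $r$, so from the configuration shown in Figure \ref{fig:differentiable} the direction of the segment is $\theta = \gamma - \psi$, whence $\theta \bmod \pi = (\gamma - \psi)\bmod \pi$. This already establishes the second equality of the lemma. Next I would read off the slope of the \emph{same} segment at the arrival point: there the side $r'$ makes angle $\gamma'$ with the horizontal and the incoming trajectory makes angle $\psi'$ with $r'$, so with the orientation indicated in the figure the segment has direction $\gamma' + \psi'$, i.e.\ slope $(\gamma'+\psi')\bmod \pi$.

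To close the argument I would observe that, the segment being straight, its slope is constant along it: the slope computed at $(\bar{x},\bar{y})$, which is $\theta \bmod \pi$, must equal the slope computed at $(\bar{x}',\bar{y}')$, which is $(\gamma'+\psi')\bmod \pi$. Chaining this with the previous step yields $\theta \bmod \pi = (\gamma'+\psi')\bmod \pi = (\gamma-\psi)\bmod \pi$, as claimed. A quick sanity check (a horizontal trajectory $\theta=0$ meeting two vertical sides gives $\gamma-\psi = \tfrac{\pi}{2}-\tfrac{\pi}{2}=0$ and $\gamma'+\psi' = \tfrac{\pi}{2}+\tfrac{\pi}{2}\equiv 0 \pmod \pi$) confirms that the asymmetry between $\gamma-\psi$ and $\gamma'+\psi'$ is genuine and harmless once one reduces modulo $\pi$.

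The computations here are entirely elementary, so the only point requiring care — and the one place the argument could go astray — is the bookkeeping of orientations: whether each turning angle enters with a plus or a minus sign ($\gamma-\psi$ at the departure side versus $\gamma'+\psi'$ at the arrival side), and the systematic reduction modulo $\pi$. That reduction is not cosmetic but essential, precisely because an unoriented line, and the angle between two such lines, are defined only up to $\pi$; it is what reconciles the two sign conventions. I expect this orientation tracking, rather than any genuine conceptual difficulty, to be the main obstacle.
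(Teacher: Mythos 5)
Your argument is correct. Note that the paper itself gives no proof of this lemma at all: it is asserted directly from Figure \ref{fig:differentiable} and then invoked in the subsequent theorem's proof in the form $\theta = 2\pi + \gamma - \psi = \gamma' + \psi'$ (under the normalization $\pi < \theta < 2\pi$), which is exactly the identity your argument establishes once reduced modulo $\pi$. Your route --- that the unoriented direction of the single straight segment joining $(\bar{x},\bar{y})$ to $(\bar{x}',\bar{y}')$ is a well-defined element of $\mathbb{R}/\pi\mathbb{Z}$, and that reading it off at the departure point gives $(\gamma-\psi)\bmod\pi$ while reading it off at the arrival point gives $(\gamma'+\psi')\bmod\pi$ --- is the natural formalization of what the paper takes for granted, and your observation that the sign conventions ($-\psi$ versus $+\psi'$) are fixed by the figure's orientation and reconciled only after reduction modulo $\pi$ is precisely the point that needed to be made explicit. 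The only quibble is terminological: what is well defined modulo $\pi$ is the angle of inclination of the line, not its slope (which lives in $\mathbb{R}\cup\{\infty\}$); this does not affect the argument.
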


\begin{theorem}
Suppose $\bplus$ is a polygon.  The map $F_\pm$ preserves the measure $\cos \phi dr d\phi$ on the space $(B^\pm\times S^1)/\sim_\pm$.  
\end{theorem}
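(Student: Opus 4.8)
The plan is to localize to a single collision and then to the only case that is not already classical. Along any boundary side not contained in $\aplus\cup\aminus$ the map $F_\pm$ coincides with the ordinary polygonal billiard collision map, and the invariance of $\cos\phi\,dr\,d\phi$ under that map is the classical fact recalled in the introduction; I would cite this and dispose of the non-Andreev sides in a line. The entire content of the theorem is therefore the behavior of $F_\pm$ across the nonstandard reflection rule encoded by $\sim_\pm$ on the Andreev subset, which is precisely the configuration drawn in Figure \ref{fig:differentiable}.

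For a collision at the Andreev subset I would run the standard foreshortening computation, now fed by the angle identities of the preceding Lemma. Parameterize the side carrying $(\bar x,\bar y)$ by arc length $r$ and the side carrying the next collision point $(\bar x',\bar y')$ by $r'$; since $\bplus$ is a polygon each side is straight, so the side angles $\gamma,\gamma'$ are locally constant. Writing $\phi$ for the phase-space angle and noting that the chord-foreshortening weight is $\cos\phi=\sin\psi$ in the notation of Figure \ref{fig:differentiable}, the Lemma gives $\theta\equiv\gamma-\psi\equiv\gamma'+\psi'\pmod{\pi}$. Because $\gamma$ and $\gamma'$ are constant this forces $d\psi'=-d\psi$, so the Jacobian of $(r,\psi)\mapsto(r',\psi')$ is triangular. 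Its remaining entry $\partial r'/\partial r$ is controlled by the elementary fact that displacing the footpoint along its side in a fixed chord direction $\theta$ shifts the next footpoint so that the two displacements have equal component transverse to the chord, i.e. $\sin\psi\,dr=\sin\psi'\,dr'$ up to sign. Multiplying the resulting determinant $\pm\sin\psi/\sin\psi'$ against the outgoing weight $\cos\phi'=\sin\psi'$ returns exactly $\sin\psi=\cos\phi$, so that $\cos\phi'\,dr'\,d\phi'=\cos\phi\,dr\,d\phi$ as measures.

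The main obstacle is conceptual rather than computational: one must be sure the very same angle bookkeeping survives the retro-reflection together with the gluing isometry $\rho$. Under $\sim_\pm$ a trajectory meeting $\aplus$ is continued from $\rho(\cdot)\in\aminus$ with its direction sent to $r(\gamma)+\pi$, and since $\aplus$ is vertical this composite is orientation reversing; I therefore expect the determinant above to carry a genuine minus sign on the Andreev subset. This sign is not a defect but the exact companion of the volume-parity theorem: the signed form $dx\wedge dy\wedge d\theta$ flips, while the unsigned measure $\cos\phi\,dr\,d\phi$ is unchanged. The delicate point to verify carefully is thus that the Lemma's relations $\theta\equiv\gamma-\psi\equiv\gamma'+\psi'\pmod{\pi}$ genuinely hold for the reflected parameterization used on the $\aminus$ side, for it is exactly these identities that make the incoming weight $\sin\psi$ and the outgoing weight $\sin\psi'$ cancel the foreshortening factor and upgrade what would otherwise be a mere volume-form statement into an honest measure-preservation statement.
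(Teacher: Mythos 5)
Your proposal follows the paper's proof essentially step for step: reduce to the Andreev subset since the other sides are classical, use the Lemma's relations $\theta\equiv\gamma-\psi\equiv\gamma'+\psi'\pmod{\pi}$ together with $d\gamma=d\gamma'=0$ to get $d\psi'=-d\psi$, compute $\partial r'/\partial r$ at fixed direction from the chord relation to obtain a triangular Jacobian with determinant $\pm\cos\phi/\cos\phi'$, and conclude by change of variables. The paper writes out the differentials of $\bar x'-\bar x=\tau\cos\theta$ and $\bar y'-\bar y=\tau\sin\theta$ explicitly to arrive at $\cos\phi'\,dr'+\cos\phi\,dr=\tau\,d\theta$ and the matrix $D_{(r,\phi)}F_\pm$, but the substance and the sign discussion are the same as yours.
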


\begin{proof}
	It is known that the billiard map $F_\pm$ is differentiable when collisions occur in $\bplus\setminus \aplus$ or $\bminus\setminus\aminus$ and away from corners or other singularities of the flow.  Additionally, the billiard map $F_\pm$ preserves the measure $\cos\phi drd\phi$ when collisions occur on $\bplus\setminus \aplus$ or $\bminus\setminus\aminus$, since $F_\pm$ is identical to $F$ on $\bplus\setminus\aplus$ and $\bminus\setminus\aminus$.
	
	What follows is focused on showing that $F_\pm$ is differentiable at $(\bar{x},\bar{y})\in \aplus$.\footnote{Recall that we are assuming that $\bplus$ and $\bminus$ are polygons.}  The boundary $\bplus$ of $\Omega(\bplus)$ can be parameterized.  Suppose $\aplus$ is parameterized by $r$. The orientation of the parameterization of $\bminus$ is opposite that of $\bplus$.  
	
	First, we note that the length of the segment between $(\bar{x},\bar{y})$ and $(\bar{x}',\bar{y}')$ is denoted by $\tau$.  Then,
	\begin{align}
\notag		\bar{x}' - \bar{x}  = \tau\cos\theta\quad&\quad \bar{y}' - \bar{y}  = \tau\sin\theta
	\end{align}
	
Without loss of generality, we assume that $\pi<\theta<2\pi$ so that $\theta = 2\pi +\gamma-\psi = \gamma'+\psi'$.  Then,
\begin{align}
\notag	d\theta = d\gamma &-d\psi  = d\gamma'+d\psi'.
\end{align}
	
Since $d\gamma = d\gamma' =0$, we see that $-d\psi = d\psi'$.  Continuing, we see that 
	\begin{align}
\notag		d(\bar{x}' - \bar{x}) &= d(\tau\cos\theta)\\
\notag		\cos\gamma'dr' - \cos\gamma dr &= \cos\theta d\tau -\tau \sin\theta d\theta
	\end{align}
\noindent and
	\begin{align}
\notag		d(\bar{y}' - \bar{y}) &= d(\tau\sin\theta)\\
\notag		\sin\gamma'dr' - \sin\gamma dr &= \sin\theta d\tau +\tau \cos\theta d\theta
	\end{align}
As in the non-Andreev case (i.e., the case where $\Omega(B)$ is a polygonal billiard and every point $\mathbf{x}$ of $B$ at which a well-defined tangent exists, the classic law of reflection dictates how a pointmass continues on after colliding at $\mathbf{x}$), we compare coefficients of trigonometric expressions to obtain
\begin{align}
\notag	\sin \psi' dr' +\sin \psi dr & = \tau d\theta.
\end{align}

Letting $\psi =  \frac{\pi}{2} - \phi$ and $\psi' = \frac{\pi}{2}-\phi'$, we have that 
\begin{align}
\notag	\cos \phi' dr' +\cos \phi dr & = \tau d\theta.
\end{align}

\noindent Since $d\gamma=d\gamma' = 0$, we have that 
\begin{align}
\notag	-\cos\phi' dr' &= \cos\phi dr+\tau d\phi 
\end{align}

\noindent and
\begin{align}
\notag-\cos\phi'd\phi' &= \cos\phi'd\phi 
\end{align}

 The derivative of $F_\pm$ at $(r,\phi)$ is then 
\begin{align}
\notag	D_{(r,\phi)}F_\pm &= \frac{-1}{\cos\phi'} \left[\begin{array}	{cc}
		\cos \phi & \tau \\ 0 &\cos\phi'\end{array}\right].
\end{align}

\noindent We then see that the determinant of $D_{(r,\phi)}F_\pm$ is $\cos\phi/\cos\phi'$ and
\begin{align}
\notag	\iint_{F_\pm(D)} \cos \phi' dr'd\phi' &= \iint_D \cos\phi drd\phi.
\end{align}
\end{proof}

\section{A fractal perturbation of an Andreev billiard}

\label{sec:perturbationsInTheAndreevSideOfOmegaA}
The following is motivated by  the toy model of the physical setting of the electron-hole dynamics in a metallic nanowire with three ``sides'' of the wire exhibiting the standard law of reflection and the other being the superconducting side, as  illustrated in Figure \ref{fig:PhysicalAndreevReflection}.  We give examples where particular perturbations may pose a problem for reliably modeling the dynamics of the electron-hole flow in a nanowire.  

Consider a toy model of a physical rectangular Andreev billiard $\Omega(R)$ with rationally commensurate side lengths (the ratio of the lengths of any two sides is a rational value) and the base of $\Omega(R)$ being the subset of $R$ at which retro-reflection with parity occurs; see Figures \ref{fig:electronPairing}--\ref{fig:PhysicalAndreevReflection} for a reminder of exactly how the toy model for physical Andreev reflection is behaving.

Consider the perturbation of $\Omega(R)$ given in Figure \ref{fig:rectangularNotchPerturbationNoOrbit}.   The effect of perturbing the wire by introducing this rectangular region is shown in Figure \ref{fig:rectangularNotchPerturbation}.  We see in Figure \ref{fig:rectangularNotchPerturbation} that the perturbation of the side has the effect of forcing the billiard ball back along a path in the same direction of its entry or along the direction as expected in the absence of the perturbation, depending on where the billiard ball began.  Indeed, for any direction, one can construct a rectangular perturbation where the choice of initial base point results in the perturbation affecting the billiard ball differently.

\begin{figure}
\begin{center}
\includegraphics[scale=.7]{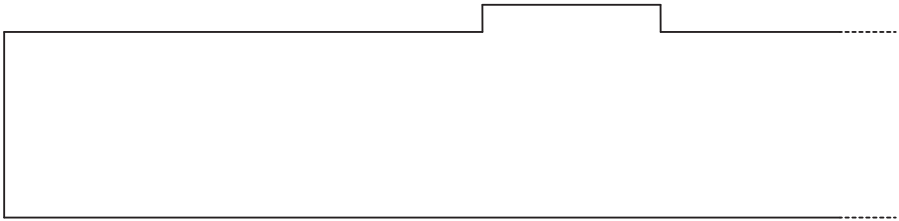}
\end{center}
\caption{A perturbation of $\Omega(R)$, where $\Omega(R)$ is a toy model for physical rectangular Andreev billiard.  There are naturally occurring defects in any nanowire.  We represent here an idealized model of a nanowire with a fairly unrealistic perturbation, but a reasonable representation of a perturbation.  Recall that a nanowire is much longer than it is tall ($1,000$ nanometers vs. $10$ nanometers).  Hence, any perturbation will be minuscule, relative to the overall length of the wire.}
\label{fig:rectangularNotchPerturbationNoOrbit}
\end{figure}

\begin{figure}
\begin{center}
\includegraphics[scale=.7]{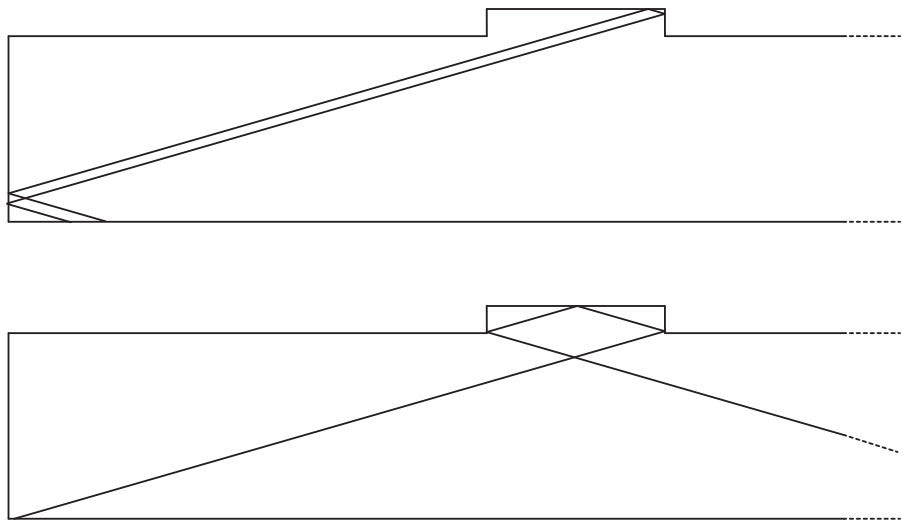}
\end{center}
\caption{The effects of the perturbation shown in Figure \ref{fig:rectangularNotchPerturbationNoOrbit} on two different orbits in the same direction, but with different initial starting points behave very differently.}
\label{fig:rectangularNotchPerturbation}
\end{figure}

In \cite{LapMilNie}, billiard dynamics on the so-called $T$-fractal billiard $\Omega(\mathscr{T})$ are studied; see Figure \ref{fig:tFractal}.\footnote{To the best of the author's knowledge, the $T$-fractal was first introduced in \cite{AcST}.} The $T$-fractal billiard is not a classical billiard; it is a fractal billiard as discussed in \cite{LapNie1,LapNie2,LapNie3,LapNie4}.  However, in particular rational directions (of which there are countably infinitely many), one can determine periodic orbits of $\Omega(\mathscr{T})$. Such orbits reach the top of the $T$-fractal billiard $\Omega(\mathscr{T})$, and, effectively retro-reflect, but without a change in parity.  Upon returning to the bottom of the fractal billiard, the ball continues accordingly; see, e.g.,  Figure \ref{fig:periodicOrbitTFractal}.  Specifically, Theorem 4.9 in \cite{LapMilNie} describes a countable family of directions and a dense set of initial basepoints from which one can construct a periodic degenerate orbit of the $T$-fractal billiard.  We restate the theorem here for the reader's easy reference.

\begin{theorem}[\cite{LapMilNie}]
\label{thm:LapMilNie}
Let $p>1$ be an odd, positive integer and $x_{0}^0 \neq m2^{-l}$, $l,m$ being positive integers.  Suppose $\{\mathscr{O}_n(x^{0}_n,\theta^0)\}_{n=i}$ is a sequence of compatible periodic orbits with $\tan{\theta^0} = \frac{1}{p}$.  Then, the Hausdorff-Gromov limit $\mathscr{O}(x^0,\theta^0)$ of the sequence of compatible periodic orbits is a periodic orbit of $\Omega(T)$.	
\end{theorem}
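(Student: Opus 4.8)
The plan is to exploit the self-similar prefractal structure of $\Omega(\mathscr{T})$ together with the rationality of the direction, reducing the statement to a convergence argument for the associated compatible orbits. First I would recall that $\Omega(\mathscr{T})$ is realized as a Hausdorff limit of a nested sequence of rational prefractal approximations $\Omega(\mathscr{T}_n)$, each of which is a genuine polygonal (indeed rational) billiard. On each $\Omega(\mathscr{T}_n)$ the direction determined by $\tan\theta^0 = 1/p$ is rational, so by the standard unfolding of a rational polygon to a translation surface, the flow in that direction decomposes into cylinders of periodic trajectories. Consequently each $\mathscr{O}_n(x_n^0,\theta^0)$ is periodic on $\Omega(\mathscr{T}_n)$, provided its basepoint avoids the finitely many singular (corner) directions at that level.

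Second, I would use the hypothesis $x_0^0 \neq m2^{-l}$ to guarantee exactly this avoidance at every level simultaneously. The dyadic points $m2^{-l}$ are precisely the projections of the \emph{elusive points}---the accumulation set of the corners introduced at successive stages of the construction---so a non-dyadic basepoint, transported along by the compatibility relation, never lands on a corner of any $\Omega(\mathscr{T}_n)$ and never converges to an elusive point. This is the step that ensures each $\mathscr{O}_n$ is a bona fide periodic orbit, rather than a singular trajectory terminating in a corner, and that the entire family is well defined.

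Third, I would establish convergence of $\{\mathscr{O}_n\}$ in the Hausdorff-Gromov metric. By compatibility, the portion of $\mathscr{O}_{n+1}$ lying in the coarser region $\Omega(\mathscr{T}_n)$ coincides with $\mathscr{O}_n$ there; the only new behavior at level $n+1$ occurs in the finer features, whose diameters tend to $0$. Hence the orbit segments form a Cauchy sequence of compact subsets of the plane, and the limit $\mathscr{O}(x^0,\theta^0)$ exists. Because the odd parity of $p$ forces the trajectory to retro-reflect at the top of each $\mathscr{T}_n$ and return to the base, the limiting trajectory closes up: it is a degenerate periodic orbit of $\Omega(\mathscr{T})$ that retro-reflects at the top without a change in parity.

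The hard part will be the third step---specifically, verifying that the limit is genuinely a single closed, periodic trajectory rather than merely the closure of a non-periodic or singular orbit. The delicate point is the behavior at the top of the fractal, where the retro-reflection must be shown to persist in the limit and to define a well-defined reversal of direction despite the accumulation of corners there. Controlling this requires uniform, scale-independent bounds on the period and on the number of reflections contributed by each generation, and it is here that the odd parity of $p$ and the non-dyadic basepoint condition do the essential work.
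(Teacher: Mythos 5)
This theorem is not proved in the present paper: it is Theorem 4.9 of \cite{LapMilNie}, restated verbatim ``for the reader's easy reference,'' and the text explicitly defers to the proof given there. So there is no in-paper argument to measure your proposal against; I can only assess it as a reconstruction of the cited proof.

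As such a reconstruction, your outline is pointed in the right direction --- prefractal rational approximations $\Omega(\mathscr{T}_n)$, complete periodicity of rational-slope directions on the unfolded square-tiled surfaces, the non-dyadic condition on $x_0^0$ keeping the footprint away from corners introduced at every generation, and convergence of the compatible orbits in the Hausdorff metric --- and this matches the framework of \cite{LapMilNie} (sequences of compatible periodic orbits, elusive points). But there is a genuine gap, and you have in effect flagged it yourself: everything that makes the theorem true for \emph{these particular} directions is asserted rather than derived. Specifically, (i) the claim that compatibility forces $\mathscr{O}_{n+1}$ to coincide with $\mathscr{O}_n$ on $\Omega(\mathscr{T}_n)$ is not automatic --- compatibility is a condition on initial data, and the coincidence of footprints on the common region is precisely what must be established for slope $1/p$; (ii) the role of $p$ odd in producing the retro-reflection at the top of each cell, and hence the closing-up of the limit curve, is the combinatorial heart of the argument and is nowhere computed; and (iii) your identification of the dyadic points $m2^{-l}$ with ``projections of elusive points'' conflates two different obstructions (corners of the prefractals versus limit points of the fractal boundary not lying in any prefractal), and the hypothesis is needed to avoid the former at every finite level. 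Without (i) and (ii) the ``Cauchy sequence of compact sets'' step has no content, since one cannot bound where the new segments appear or verify that their total length is summable. In short: correct scaffolding, but the load-bearing steps are deferred to exactly the lemmas one would have to import from \cite{LapMilNie}.
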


\begin{figure}
\begin{center}
\includegraphics[scale=.7]{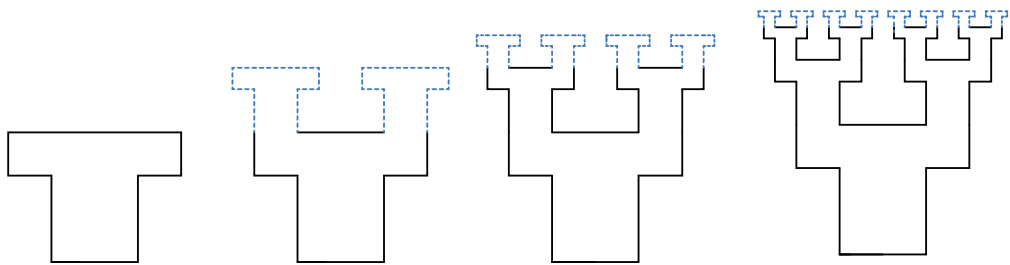}
\end{center}
\caption{The T-fractal billiard.}
\label{fig:tFractal}
\end{figure}

\begin{figure}
\begin{center}
\includegraphics[scale=.6]{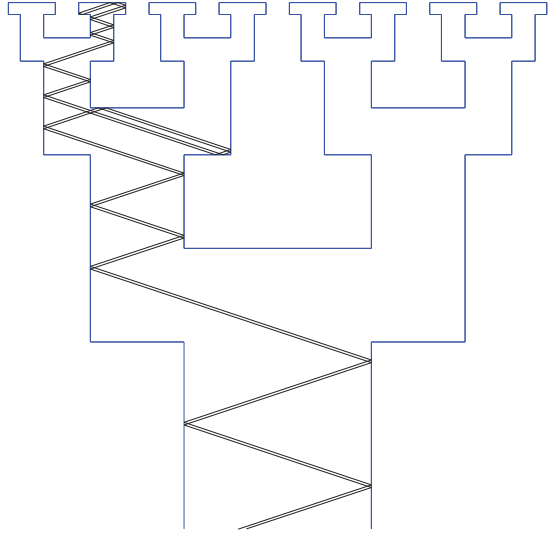}
\end{center}
\caption{A periodic orbit of the $T$-fractal billiard continuing upon returning to the bottom of $\Omega(\mathscr{T})$.}
\label{fig:periodicOrbitTFractal}
\end{figure}

\begin{figure}
\begin{center}
\includegraphics[scale=.7]{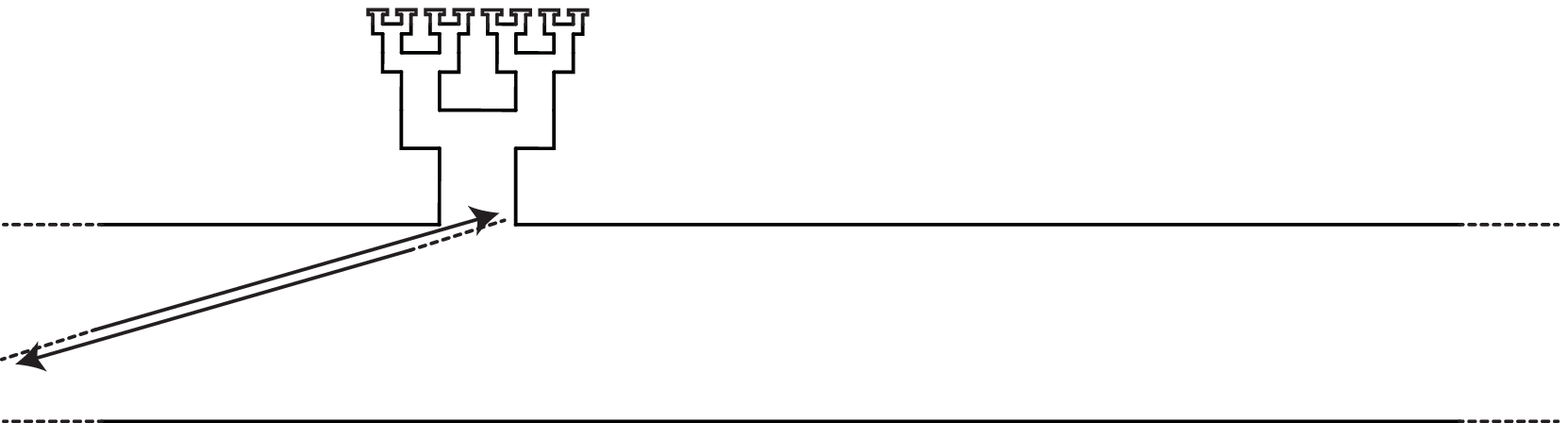}
\end{center}
\caption{A $T$-fractal perturbation of a nanowire.}
\label{fig:tFractalNotchPerturbationWithorbit}
\end{figure}

Suppose we consider a perturbation of a nanowire in the shape of the $T$-fractal billiard $\Omega(\mathscr{T})$.  Supposing we consider an electron-hole (or a billiard ball with parity) entering into such a perturbation at a point and with an initial direction described in the hypotheses of Theorem  \ref{thm:LapMilNie}, then such a perturbation will cause the electron-hole to return to the rectangular region of the nanowire antiparallel to the direction in which it entered, this being discussed in the proof of Theorem 4.9 in \cite{LapMilNie}.  This sort of perturbation causes the orbit of the nanowire to behave as if the electron-hole had intersected a vertical side at or near the corner of the nanowire; see Figure \ref{fig:tFractalNotchPerturbationWithorbit} and Figure \ref{fig:effectivelyAnEdge}.

\begin{figure}
\begin{center}
\includegraphics[scale=2]{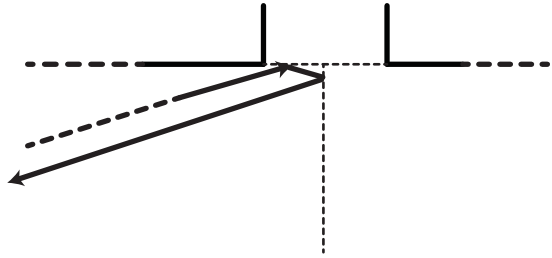}
\end{center}
\caption{For particular directions, an orbit will enter the $T$-fractal perturbation of the nanowire and exit in a direction anti-parallel to the direction of entry.  The dashed vertical line is to indicate where the nanowire is effectively severed and how the orbit is then behaving as if the billiard ball had intersected the edge near the corner.}
\label{fig:effectivelyAnEdge}
\end{figure}

To be clear, never\footnote{in any reasonable amount of time} will anyone be able to perturb a nanowire in such a way that the $T$-fractal appears, simply because the $T$-fractal is only a set that exists in one's mind.  For that matter, and for all intents and purposes, any approximation of the $T$-fractal is not a reasonable or realistic perturbation of a rectangular nanowire.  Nevertheless, whether it be in a physical setting or an abstract setting, one may find the existence of a fractal perturbation interesting in its own right or inspiring some other train of thought not yet foreseen by the author.  The argument made in this paper is that a rectangular billiard is a reasonable approximation to a nanowire and that the evidence of a particular perturbation mimicking the severing of a nanowire may highlight a new perturbation worth examining on the physical side of the subject, if perhaps only in computer simulations.

\section{Concluding remarks}

From \S\ref{sec:ctsFlow}, it is clear that the flow on a rational polygonal Andreev billiard in almost any direction will be closed.  The volume parity discussed in \S\ref{sec:ctsFlow} is analogous to the electron-hole parity modeled in Figures \ref{fig:electronPairing} and \ref{fig:electronFillsHole}.  As expected, however, the discrete flow $F_\pm^n$ preserves the measure $\cos\phi d\phi dr$ on the space $(\andbill\times S^1)/\sim_\pm$.  

In returning to the toy model, a discussion of perturbations of a nanowire was given in \S\ref{sec:perturbationsInTheAndreevSideOfOmegaA}.  It was there that the $T$-fractal was shown to effectively sever a nanowire by forcing all trajectories in a particular fixed direction to retroreflect back into the rectangular region.  Such a theoretical investigation sought to shed light on the idea that fractal perturbations of polygonal billiard tables (esp., polygonal Andreev billiard tables) are meaningful.

Much of this article has been on the topic of polygonal Andreev billiards.  One can reasonably expect the same rigorously defined Andreev reflection to carry over to a more general setting where the boundary of an Andreev billiard is not the union of two polygonal billiards but two copies of a piecewise smooth curve properly identified.  With suitable adjustments to the construction of an Andreev billiard, one can then discuss a more general Andreev billiard, analyze the flow and determine whether or not in almost any direction the flow would remain closed.  It is unlikely that the flow in almost any direction on an arbitrary Andreev billiard will be closed since there are known examples of billiard tables whereupon the flow is not equidistributed nor minimal; see \cite{Sm}.  Additionally, it is conceivable that there exists a billiard such that for some fixed direction $\theta$, every orbit is not closed, but also not dense, meaning the orbit gets trapped in some proper subset of the table.  

Future works will focus on generalizing the notion of an Andreev billiard and, simultaneously, investigating the effects of fractal perturbations of polygonal billiards.

\subsubsection*{Acknowledgments}

The author would like to thank Terry A. Loring and John Huerta for thoughtful comments on numerous drafts of this paper and Terry A. Loring for suggesting the original project.


\begin{thebibliography}{alpha}

\bibitem[AcST]{AcST} Achdou, Y. Sabot, C., Tchou, N.: Diffusion and propagation problems in some ramified domains with a fractal boundary, \textit{M2AN Math. Model. Numer. Anal.} No. 4, \textbf{40} (2006), 623--652.




\bibitem[Bee1]{Bee1} Beenakker, C. W. J.: Random-matrix theory of quantum transport, \textit{Rev. Mod. Phys.} No. 3, \textbf{69} (1997), 731--808.

\bibitem[Bee2]{Bee2} Beenakker, C. W. J.: Andreev billiards, in: \textit{Quantum Dots}: \textit{a Doorway to Nanoscale Physics} (W. Dieter Heiss, ed.), Lecture Notes in Physics, Springer Berlin Heidelberg, Berlin \textbf{667} (2005), pp. 131--174.

\bibitem[BerBarBee]{BerBarBee} B\'eri, B., Bardarson, J. H., Beenakker, C. W. J.: Effect of spin-orbit coupling on the excitation spectrum of Andreev billiards, \textit{Phys. Rev. B} No. 16, \textbf{74} (2007), 165307-1--165307-5.



\bibitem[CheNie1]{CheNie1} Chen, J. P., Niemeyer, R. G.: Periodic billiard orbits of self-similar Sierpinski carpets, 31 pages, e-print, arXiv:math.DS.1303.4032v1, 2014 (to appear in the Journal of Mathematical Analysis and Applications).

\bibitem[CheNie2]{CheNie2} Chen, J. P., Niemeyer, R. G.: Periodic billiard orbits of non-self-similar Sierpinski carpets, in progress, 2014.
\bibitem[ChrMar]{ChrMar} Chernov, N., Markarian, R.: \textit{Chaotic Billiards}, Amer. Math. Soc., Providence, RI, 2006.


\bibitem[GaStVo]{GaStVo} Galperin, G., B., Stepin, A. M.,  Vorobets, Ya.: Periodic billiard trajectories in polygons, \textit{Russian Math. Surveys} No. 3, \textbf{47} (1992), 5--80.

\bibitem[GooJacBee]{GooJacBee} Goorden, M. C., Jacquod, Ph., Beenakker, C. W. J.: Quantum-to-classical crossover for Andreev billiards in a magnetic field, \textit{Phys. Rev. B} No. 6, \textbf{72} (2005), 064526-1--064526-10.



\bibitem[Gut1]{Gut1} Gutkin, E.: Billiards in polygons: Survey of recent results, \textit{J. Stat. Phys.} \textbf{83} (1996), 7--26.

\bibitem[Gut2]{Gut2} Gutkin, E.: Billiards on almost integrable polyhedral surfaces, \textit{Erg. Th. and Dyn. Syst.} \textbf{4} (1984), 569--584.

\bibitem[GutJu1]{GutJu1} Gutkin, E., Judge, C.: The geometry and arithmetic of flat surfaces with applications to polygonal billiards, \textit{Math. Res. Lett.} \textbf{3} (1996), 391--403.

\bibitem[GutJu2]{GutJu2} Gutkin, E., Judge, C.: Affine mappings of flat surfaces: Geometry and arithmetic, \textit{Duke Math. J.} \textbf{103} (2000), 191--213.




\bibitem[HuSc]{HuSc} Hubert, P., Schmidt, T.: An introduction to Veech surfaces, in: \textit{Handbook of Dynamical Systems}, vol. 1B (A. Katok and B. Hasselblatt, eds.), Elsevier, Amsterdam, 2006, pp. 501--526.

\bibitem[KeMasSm1]{KeMasSm1} Kerckhoff, S., Masur, H., Smillie, J.: A rational billiard flow is uniquely ergodic in almost every direction, \textit{Bull. Amer. Math. Soc.}, No. 2, \textbf{13} (1985), 141--142.

\bibitem[KeMasSm2]{KeMasSm2} Kerckhoff, S., Masur, H., Smillie, J.: Ergodicity of billiard flows and quadratic differentials, \textit{Ann. of Math.} \textbf{124} (1986), 293--311.

\bibitem[KoMalGol]{KoMalGol} Kosztin, I., Maslov, D. L., Golbart, P. M.: Chaos in Andreev Billiards, \textit{Phys. Rev. Lett.}, \textbf{75} (1995), 1735--1738.

\bibitem[LapMilNie]{LapMilNie} Lapidus, M. L., Miller, R. L., Niemeyer, R. G.: Nontrivial paths and periodic orbits of the $T$-fractal billiard, \textit{Nonlinearity} \textbf{29} (2016), 2145--2172. [E-print, arXiv:math.DS.1503.08492v2, 2015.]


\bibitem[LapNie1]{LapNie1} Lapidus, M. L., Niemeyer, R. G.: Towards the Koch snowflake fractal billiard---Computer experiments and mathematical conjectures, in: \textit{Gems in Experimental Mathematics} (T. Amdeberhan, L. A. Medina and V. H. Moll, eds.), Contemporary Mathematics, Amer. Math. Soc., Providence, RI, \textbf{517} (2010), pp. 231--263. [E-print: arXiv:math.DS.0912.3948v1, 2009.]

\bibitem[LapNie2]{LapNie2} Lapidus, M. L., Niemeyer, R. G.: Families of periodic orbits of the Koch snowflake fractal billiard, 63 pages, e-print, arXiv:math.DS.1105.0737v1, 2011.


\bibitem[LapNie3]{LapNie3} Lapidus, M. L., Niemeyer, R. G.: Sequences of compatible periodic hybrid orbits of prefractal Koch snowflake billiards, \textit{Discrete and Continuous Dynamical Systems -- Ser. A}, No. 8, \textbf{33} (2013), pp. 3719--3740. [E-print: IHES/M/2/16, 2012; arXiv:math.DS.1204.3133v1, 2012.]

\bibitem[LapNie4]{LapNie4} Lapidus, M. L., Niemeyer, R. G.: The current state of fractal billiards, in: \textit{Fractal Geometry and Dynamical Systems in Pure and Applied Mathematics II: Fractals in Applied Mathematics} (D. Carf\`i, M. L. Lapidus, M. van Frankenhuijsen, E. P. J. Pearse, eds.), Contemporary Mathematics, Amer. Math. Soc., Providence, RI, \textbf{601} (2013), pp. 251--288. [E-print: arXiv:math.DS.1210.0282v2, 2013.]









\bibitem[MasTa]{MasTa} Masur, H., Tabachnikov, S.: Rational billiards and flat structures, in: \textit{Handbook of Dynamical Systems}, vol. 1A (A. Katok and B. Hasselblatt, eds.), Elsevier, Amsterdam, 2002, pp. 1015--1090.

\bibitem[MiPiMacBee]{MiPiMacBee} Mi, S., Pikulin, D. I., Marciani, M., Beenakker, C. W. J.: $X$-shaped and $Y$-shaped Andreev resonance profiles in a superconducting quantum dot, \textit{JETP}, No. 6, \textbf{119} (2014), pp 1018--1027. 




\bibitem[RoLiAiFeBrBu]{RoLiAiFeBrBu} Rotter, S., Libisch, F., Aigner, F., Feist, J., Brezinova, I., Burgdorfer, and J.: Ballistic quantum transport through nanostructures, http://www.zid.tuwien.ac.at/ projekte/2006/06-136-3.pdf.

\bibitem[Sm]{Sm} Smillie, J.: Dynamics of billiard flow in rational polygons, in: \textit{Dynamical Systems}, Encyclopedia of Math. Sciences, vol. 100, Math. Physics 1 (Ya. G. Sinai, ed.), Springer-Verlag, New York, 2000, pp. 360--382.

\bibitem[Ta1]{Ta1} Tabachnikov, S.: \textit{Billiards}, Panoramas et Synth\`eses, Soc. Math. France, Paris, 1995.

\bibitem[Ta2]{Ta2} Tabachnikov, S.: \textit{Geometry and Billiards}, Amer. Math. Soc., Providence, RI, 2005.


\bibitem[Ve1]{Ve1} Veech, W. A.: The billiard in a regular polygon, \textit{Geom. Funct. Anal.} \textbf{2} (1992), 341--379.

\bibitem[Ve2]{Ve2} Veech, W. A.: Flat surfaces, \textit{Amer. J. Math.} \textbf{115} (1993), 589--689.


\bibitem[Vo]{Vo} Vorobets, Ya. B.: Plane structures and billiards in rational polygons: The Veech alternative, \textit{Russian Math. Surveys} \textbf{51} (1996), 779--817.


\bibitem[Wi]{Wi} Wiersig, J.: A pseudointegrable Andreev billiard, \textit{Phys. Rev. E} No. 3, \textbf{65} (2002), 036221-1--036221-5.
 
\bibitem[Zo]{Zo} Zorich, A.: Flat surfaces, in: \textit{Frontiers in Number Theory, Physics and Geometry} I (P. Cartier, \textit{et al}., eds.), Springer-Verlag, Berlin, 2002, pp. 439--585.
\end{thebibliography}
\end{document}